\begin{document}
\newtheorem{thm}{Theorem}
\newtheorem{cor}{Corollary}
\newtheorem{lem}{Lemma}
\newtheorem{slem}{Sublemma}
\newtheorem{prop}{Proposition}
\newtheorem{defn}{Definition}
\newtheorem{conj}{Conjecture}
\newtheorem{ques}{Question}
\newtheorem{claim}{Claim}
\newtheorem{rmk}{Remark}
\newcounter{constant} 
\title{Desingularization of branch points of minimal surfaces in $\mathbb{R}^4$(II)}
\author{Marina Ville}
\date{ }
\newcommand{\newconstant}[1]{\refstepcounter{constant}\label{#1}}
\newcommand{\useconstant}[1]{C_{\ref{#1}}}
    \renewcommand{\arraystretch}{2}
    \maketitle
\begin{abstract}
We desingularize a branch point $p$ of a minimal disk $F_0(\mathbb{D})$ in $\mathbb{R}^4$ through immersions $F_t$'s which have only transverse double points and are branched covers of the plane tangent to $F_0(\mathbb{D})$ at $p$. If $F_0$ is a topological embedding and thus defines a knot in a sphere/cylinder around the branch point,  the data of the double points of the $F_t$'s give us a braid representation of this knot as a product of bands.
\end{abstract}
\section{Introduction} 
\subsection{The purpose}
Minimal surfaces in $\mathbb{R}^4$ are immersed except at branch points, near which the surface is a $N$-branched covering of the  tangent plane at the branch point (for some $N>1$). In [Vi 2] we looked at a minimal map $F_0:\mathbb{D}\longrightarrow\mathbb{R}^4$ with a branch point at the origin and we described how to desingularize $F_0$          
 through minimal immersions $F_t$'s with only transverse double points. However, unlike $F_0$ these $F_t$'s were not branched coverings of the disk. We discuss here a desingularization through immersions which are not necessary minimal but which remain $N$-branched coverings of the disk.\\
 If $F_0$ is a topological embedding, we recall that the intersection of $F_0(\mathbb{D})$ with a small sphere (equivalently a small cylinder) centered at the branch point defines a knot which has a representation as a $N$-braid (cf. [S-V] mimicking the construction of [Mi]). In that case, we will use a construction of Rudolph to show how the double points of the immersions $F_t$'s appear in a {\it band representation} of this braid (i.e.
an expression of the braid as a product  of conjugates of braid generators and of their inverses).
\subsection{The setting}
We consider  a branched immersion
$$F_0:\mathbb{D}\longrightarrow \mathbb{R}^4\cong \mathbb{C}^2\times\mathbb{C}^2$$
\begin{equation}\label{branched immersion}
F_0:z\mapsto (z^N+h_1(z),h_2(z))
\end{equation}
where, for $i=1,2$, $h_i:\mathbb{D}\longrightarrow \mathbb{C}$ is a function with $|h_i(z)|=o(|z|^N)$.\\
It is standard (cf. for example [G-O-R]) to introduce a function $w:\mathbb{D}\longrightarrow\mathbb{D}$ such that
\begin{equation}\label{definition de w}
w(z)^N=z^N+h_1(z)
\end{equation}
and which verifies
\begin{equation}\label{propriete de w}
w=z+o(|z|)\ \ \ \ \ \ \ \ z=w+o(|w|)
\end{equation}
Possibly after restricting ourselves to a smaller disk centered at $0$, we reparametrize $\mathbb{D}$ with $w$ so we can
 rewrite $F$ in terms of $w$ as
\begin{equation}\label{branched immersion}
F_0:w\mapsto (w^N,h(w))
\end{equation}
where $h(w)=o(|w|^N)$.
\begin{rmk}
Throughout this paper, we only use the fact that $F_0$ is a real analytic branched immersion, not that it is minimal. We could probably also do without the real analytic assumption.
\end{rmk} 
\subsection{The construction}
For $\lambda,\mu$ small complex numbers, we will be considering the immersions
\begin{equation}\label{definition de F lambda}
F_{\lambda,\mu}:w\mapsto (w^N,h(w)+\lambda w+\mu\bar{w})
\end{equation}
possibly adding a small correction term if necessary:
\begin{equation}\label{F lambda mu gamma}
F_{\lambda,\mu,\gamma}:w\mapsto \big(w^N,h(w)+\lambda w+\mu\bar{w}+Re(\gamma w^2)\big)
\end{equation}
where $\gamma$ is very small compared to $\lambda$ and $\mu$ and is only introduced to give more wiggle room for transversality arguments.
\begin{rmk} We used immersions similar to (\ref{definition de F lambda}) in [Vi 1] where we established a
connection between the algebraic crossing number of the braid and the normal bundle of the branched disk in an ambient $4$-manifold.
\end{rmk}
The paper is devoted to proving the following:
\begin{thm}\label{desingularisation par bands} For $\lambda, \mu$ generic and small enough, $F_{\lambda,\mu}$ has a finite number of crossing points $m_1,...,m_n$, all transverse.\\
Assume that $F_0$ is a topological embedding and let $K$ be the knot defined by the branch point.
If $\frac{\mu}{\lambda}$ is small enough (resp. large enough), the knot $K$ is represented by a $N$-braid $\beta$ which is the product of the following pieces:
\begin{enumerate}
\item
$$\prod_{2k,\ 2\leq 2k\leq N-1}\sigma_{2k}$$ $$\mbox{(resp.}\ \ \ \  \prod_{2k,\ 2\leq 2k\leq N-1}\sigma^{-1}_{2k})$$

\item
$$\prod_{2k+1,\ 1\leq 2k+1\leq N-1}\sigma_{2k+1}$$ $$\mbox{(resp.}\ \ \ \  \prod_{2k+1,\ 1\leq 2k+1\leq N-1}\sigma^{-1}_{2k+1})$$

\item
for every double point $m_1,...,m_n$ of $F_{\lambda,\mu}$, one copy of
\begin{equation}
b(m_i)\sigma_{k(m_i)}^{2\epsilon(m_i)}b(m_i)^{-1}
\end{equation}
where 
\begin{itemize}
\item
$\epsilon(m_i)$ is the sign of the intersection point $m_i$
\item
$k(m_i)\in\{1,...,N-1\}$
\item
$b(m_i)$ is some element of the braid group $B_N$.
\end{itemize}
\end{enumerate}
\end{thm}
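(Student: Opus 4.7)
Use the projection $\pi:\mathbb{C}^2\to\mathbb{C}$, $(z_1,z_2)\mapsto z_1$, so that $\pi\circ F_{\lambda,\mu}(w)=w^N$. For each $r>0$ avoiding a finite set of critical values, the preimage of $\{|z_1|=r\}$ in the disk is the circle $|w|=r^{1/N}$ and its image is a closed $N$-braid $\beta_r$, the $j$-th strand at parameter $\phi=\arg z_1$ being at $z_2 = h(w_j)+\lambda w_j+\mu\bar w_j$ with $w_j=r^{1/N}e^{i(\phi+2\pi j)/N}$. A double point of $F_{\lambda,\mu}$ corresponds to a pair $w_1\ne w_2=\zeta w_1$ with $\zeta^N=1$, $\zeta\ne 1$, satisfying
$$h(w_1)-h(\zeta w_1)+\lambda(1-\zeta)w_1+\mu(1-\bar\zeta)\bar w_1=0.$$
Since $h(w)=o(|w|^N)$, the $\mathbb{R}$-linear map $(w_1,\bar w_1)\mapsto\lambda(1-\zeta)w_1+\mu(1-\bar\zeta)\bar w_1$ dominates near $0$, and a Sard--Smale argument in the parameter space $(\lambda,\mu,\gamma)$ shows that generically the double-point set is finite and each double point is transverse in $\mathbb{R}^4$. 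Let $r_1<\cdots<r_n$ be the corresponding critical radii; $\beta_r$ is constant up to isotopy on each component of $(0,\infty)\setminus\{r_i\}$, and for $r$ larger than all $r_i$ it coincides with the braid representation of $K$ since $F_{\lambda,\mu}$ is then a $C^0$-small perturbation of $F_0$.

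\textbf{The small-radius braid gives pieces 1 and 2.} For $r<r_1$, the term $h(w)=o(|w|^N)$ is negligible compared to $\lambda w+\mu\bar w$ on $|w|=r^{1/N}$ (since $r\ll r^{1/N}$ for small $r$), so $\beta_r$ is isotopic to the braid of the model $(w^N,\lambda w+\mu\bar w)$. On this model the $j$-th strand is at
$$z_2^{(j)}(\phi)=r^{1/N}\bigl(\lambda e^{i\theta_j}+\mu e^{-i\theta_j}\bigr),\qquad \theta_j=\tfrac{\phi+2\pi j}{N},$$
so at any $\phi$ the $N$ strands occupy $N$ positions on a single ellipse $\mathcal{E}\subset\mathbb{C}_{z_2}$ with parameters differing by $2\pi/N$ in $\theta$, and as $\phi$ runs through $[0,2\pi]$ every parameter advances by exactly $2\pi/N$, sending strand $j$ to the starting position of strand $j+1$. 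When $|\mu/\lambda|$ is small, $\mathcal{E}$ is nearly round and traversed in the positive sense; I would read off when the real parts of the $z_2^{(j)}$'s coincide as $\phi$ grows, and then, using the commutation of non-adjacent generators, verify that the resulting word is exactly
$$\Big(\prod_{2\leq 2k\leq N-1}\sigma_{2k}\Big)\Big(\prod_{1\leq 2k+1\leq N-1}\sigma_{2k+1}\Big).$$
When $|\mu/\lambda|$ is large, $\mathcal{E}$ is nearly round but traversed in the opposite sense, so the same combinatorics produces the same word with every $\sigma_i$ replaced by $\sigma_i^{-1}$.

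\textbf{Bands at double points and assembly.} As $r$ crosses $r_i$, the link $\beta_r$ undergoes a local modification near the value $\phi_i=\arg z_1(m_i)$, where two of the strands briefly come together transversely and then separate. This is precisely the saddle picture of Rudolph's band-presentation construction: the change from $\beta_{r_i^-}$ to $\beta_{r_i^+}$ amounts to multiplying by a band of the form $b(m_i)\sigma_{k(m_i)}^{2\epsilon(m_i)}b(m_i)^{-1}$, where the index $k(m_i)$ encodes which two strands collide at $m_i$, the conjugating braid $b(m_i)$ is the partial braid motion from the reference small-$r$ slice up to the pre-crossing level of $m_i$, and the sign $\epsilon(m_i)\in\{\pm1\}$ is the transverse intersection sign of $m_i$ in the oriented surface $F_{\lambda,\mu}(\mathbb{D})$. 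Composing the small-radius braid of Step 2 with these band factors in increasing order of $r_i$ produces the braid representation of $K$ as claimed. The main obstacle is the combinatorial identification in Step 2: one must verify that the cyclic-shift braid of $N$ points on an ellipse truly expands, modulo commutativity of distant $\sigma_i$'s, into the precise alternating pattern of pieces 1 and 2, and then align the conjugating braids $b(m_i)$ and the signs $\epsilon(m_i)$ consistently with this normal form so that the final product is a genuine equality in $B_N$.
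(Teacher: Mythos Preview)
Your outline is correct and follows a genuinely different route from the paper. You argue Morse-theoretically with the radial family $|z_1|=r$: the closed braid $\beta_r$ is constant between the critical radii $r_i=|w_i|^N$ of the double points, the innermost $\beta_r$ is the linear ellipse braid of $(w^N,\lambda w+\mu\bar w)$, and each crossing of $r_i$ inserts a Rudolph band $b\,\sigma_k^{\pm2}\,b^{-1}$. The paper instead builds a single \emph{keyhole} loop $\Gamma\subset\Pi_2$---a small circle $C_\rho$ about $0$, joined by thin tubes $\mathcal T_i$ to small circles $\Gamma_i$ about the projected double points $p_i$---and reads the braid as $F_{\lambda,\mu}(\mathbb D)\cap\pi_2^{-1}(\Gamma)$; the three kinds of arc in $\Gamma$ give literally the three kinds of factor in the theorem. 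To make this work the paper introduces the $1$-complex $A\subset\Pi_2$ of points above which two sheets share their first three coordinates, proves (Lemmas~2--6) that $A$ is a stratified curve, regular at each $p_i$ and disjoint from the triple-crossing locus, and then shows by an explicit determinant computation that the two crossings on each $\Gamma_i$ both carry the sign $\epsilon(m_i)$ while the crossings on the two edges of $\partial\mathcal T_i$ cancel pairwise, producing the conjugators $b(m_i)$ concretely. What each approach buys: yours is lighter and more conceptual, but it outsources to Rudolph both the exponent $2$ and the identification of its sign with the intersection sign $\epsilon(m_i)$, and it leaves the ellipse-shift combinatorics for pieces 1 and 2 as an unexecuted verification; the paper's argument is heavier in preparatory lemmas but is self-contained---it actually carries out the cosine computation $\cos\frac{2\pi}{N}(\theta+k)=\cos\frac{2\pi}{N}(\theta+l)$ that identifies the small-radius braid, and it proves the sign lemma from scratch rather than by citation.
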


\subsection{Trivial knots}
It follows from the expression of the braid that, if $F_{\lambda,\mu}$ is an embedding for $\frac{\lambda}{\mu}$ large enough or small enough, the knot $K$ is trivial. 
There exist branched minimal disks with corresponding knots which are non trivial but have  $4$-genus $0$, for example $10_{155}$ (cf. [S-V]). For such a knot, the signed number of double points of the $F_{\lambda,\mu}$ (for $\frac{\lambda}{\mu}$ large enough or small enough) is zero but $F_{\lambda,\mu}$ has necessarily double points. 
\subsection{Sketch of the paper}
We will first establish some properties of the $F_{\lambda,\mu}$'s, for  generic $\lambda$, $\mu$'s; then we will construct a closed loop $\Gamma$ in the plane $\Pi_2$ generated by  the first two coordinates. The braid $\beta$ considered in Th. \ref{desingularisation par bands} will be defined as 
\begin{equation}\label{definition de la tresse}
\beta=\pi_2^{-1}(\Gamma)\cap F_{\lambda,\mu}(\mathbb{D})
\end{equation}
for $\lambda,\mu$ small enough and where 
\begin{equation}\label{definition de pi2}
\pi_2:\mathbb{R}^4\longrightarrow\Pi_2
\end{equation}
is the orthogonal projection.
\section{The family of immersions}\label{The family of immersions}
\begin{lem}\label{lambda generique}
For generic $\lambda$, $\mu$'s, the following is true:\\
if $w_1, w_2\in\mathbb{D}$ verify $w_1\neq w_2$ and $F_{\lambda,\mu}(w_1)=F_{\lambda,\mu}(w_2)$, then the two tangent planes 
$dF_{\lambda,\mu}(w_1)(\mathbb{R}^2)$ and $dF_{\lambda,\mu}(w_2)(\mathbb{R}^2)$ are transverse.
\end{lem}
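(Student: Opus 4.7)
The plan is a parameter-transversality (Sard-type) argument. A double point $w_1\neq w_2$ of $F_{\lambda,\mu}$ satisfies $w_1^N=w_2^N$ (from the first coordinate), so $w_2=\zeta w_1$ for some $N$-th root of unity $\zeta\neq 1$ and necessarily $w_1\neq 0$; only finitely many $\zeta$'s intervene. For each such $\zeta$, the remaining scalar constraint coming from the second coordinate of $F_{\lambda,\mu}(w_1)=F_{\lambda,\mu}(w_2)$ reads
\[
\lambda(1-\zeta)w_1+\mu(1-\bar\zeta)\bar w_1 \;=\; h(\zeta w_1)-h(w_1),
\]
a single complex equation in $(\lambda,\mu)$.

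Next I would write $T_wF_{\lambda,\mu}(\mathbb{D})$ as a graph over $\Pi_2$. Because $w\mapsto w^N$ is a local biholomorphism away from $0$, the restriction of $\pi_2$ to $T_w$ is an $\mathbb{R}$-linear isomorphism onto $\Pi_2$, and $T_w$ is the graph of an $\mathbb{R}$-linear map $L_w:z\mapsto A_w z+B_w\bar z$ into $\Pi_2^\perp$, with
\[
A_w=\frac{\partial h/\partial w+\lambda}{Nw^{N-1}},\qquad B_w=\frac{\partial h/\partial\bar w+\mu}{N\bar w^{N-1}},
\]
obtained from the decomposition $dF_{\lambda,\mu}=\partial_wF\,dw+\partial_{\bar w}F\,d\bar w$. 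Two such graphs in $\mathbb{R}^4=\Pi_2\oplus\Pi_2^\perp$ are transverse iff $L_{w_1}-L_{w_2}$ is invertible, and for a map of the form $z\mapsto Az+B\bar z$ this amounts to $|A|\neq|B|$. Non-transversality of $T_{w_1}$ and $T_{\zeta w_1}$ is therefore encoded by the single real equation
\[
|A_{w_1}-A_{\zeta w_1}|^2=|B_{w_1}-B_{\zeta w_1}|^2.
\]

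For each of the finitely many $\zeta$'s, I then consider the real-analytic map
\[
\Phi_\zeta:(\mathbb{D}\setminus\{0\})\times\mathbb{C}^2\ni(w_1,\lambda,\mu)\longrightarrow\mathbb{C}\times\mathbb{R}
\]
whose zero set $Z_\zeta$ encodes the bad triples (double point with non-transverse tangents). The source has real dimension $6$ and the target $3$; provided the three real defining equations are generically independent, $Z_\zeta$ has real dimension at most $3$, and so its projection to $\mathbb{R}^4$ is a closed semi-analytic subset of real dimension at most $3$. The finite union over $\zeta$ preserves this bound, and its complement is open and dense in $\mathbb{R}^4$---the generic set demanded by the lemma.

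The hard point is therefore the generic independence of the three equations. Substituting $\lambda$ from the double-point equation into the non-transversality equation and using $|1-\zeta|=|1-\bar\zeta|$ together with $|\bar w_1|=|w_1|$, the would-be $|\mu|^2$-term cancels exactly, and one is left with an $\mathbb{R}$-affine equation in $\mu$ of the form $\operatorname{Re}\bigl(\bar\mu\,K(w_1)\bigr)+M(w_1)=0$, with $K,M$ explicit real-analytic functions of $w_1$ built from $\zeta$ and the derivatives of $h$. A leading-order Taylor expansion of $K$ at the origin shows that $K$ is not identically zero (provided $F_0$ is a genuine branched immersion and not degenerating into a plane), so $\{K\neq 0\}$ is open dense in the $w_1$-disk; there the non-transversality condition cuts a $1$-dimensional subset out of the $2$-real-plane of double-point $(\lambda,\mu)$'s, contributing $3$-dimensionally to $Z_\zeta$. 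On the exceptional real-analytic locus $\{K=0\}$, one has either $M\neq 0$ (and the contribution is empty) or $K=M=0$ on a real-analytic subset of positive codimension in the $w_1$-disk, whose contribution again projects to a set of real dimension at most $3$.
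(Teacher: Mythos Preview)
Your overall Sard-type strategy is in the same spirit as the paper's, but the execution is different and your final paragraph has a genuine gap.

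The paper does not parametrize tangent planes as graphs at all. It applies parametric transversality directly to
\[
\Phi(\lambda,\mu,w_1,w_2)=\bigl(F_{\lambda,\mu}(w_1),F_{\lambda,\mu}(w_2)\bigr)\in\mathbb{R}^4\oplus\mathbb{R}^4
\]
and checks that $\Phi$ is transverse to the diagonal $\Delta_8$ whenever $w_1\neq w_2$. This is done by computing two $8\times 8$ determinants built from $\partial\Phi/\partial\lambda_1$, $\partial\Phi/\partial\lambda_2$, $\partial\Phi/\partial x_1$, and one of $\partial\Phi/\partial x_2$ or $\partial\Phi/\partial y_2$; after row operations they collapse to $|1-\nu|^2N^2|w|^{2N}\operatorname{Im}\nu$ and $-|1-\nu|^2N^2|w|^{2N}\operatorname{Re}\nu$, which cannot vanish simultaneously. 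The crucial feature is that these determinants depend only on $w_1$ and $\nu=w_2/w_1$: neither $h$ nor $\lambda,\mu$ enter, so the argument is uniform and no case analysis on $h$ is needed. Transversality of $\Phi_{\lambda,\mu}$ to $\Delta_8$ for generic $(\lambda,\mu)$ is exactly the statement that the two tangent planes span $\mathbb{R}^4$.

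Your route via $T_w=\operatorname{graph}(z\mapsto A_wz+B_w\bar z)$ and the criterion $|A_{w_1}-A_{\zeta w_1}|\neq|B_{w_1}-B_{\zeta w_1}|$ is correct, but the dimension count in the last paragraph does not close. You assert that a leading-order expansion shows $K\not\equiv 0$ whenever $F_0$ is not a plane; this is false. Take $h(w)=|w|^{2m}$ with $2m>N$: then $h(\zeta w)=h(w)$ and one checks directly that $P=Q=R=0$, hence $K\equiv 0$ and $M\equiv 0$ for \emph{every} $\zeta$, yet $h\not\equiv 0$. Your fallback (``$K=M=0$ on a subset of positive codimension in the $w_1$-disk'') then fails outright, since the locus is the whole disk. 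In this example the lemma still holds because the double-point equation reduces to $\lambda(1-\zeta)w+\mu(1-\bar\zeta)\bar w=0$, forcing $|\lambda|=|\mu|$, so the bad set projects into a hypersurface; but nothing in your argument supplies this. More generally, when $K\equiv 0$ and $M\equiv 0$ the set $Z_\zeta$ is $4$-dimensional and you must separately control its projection to $(\lambda,\mu)$-space, which you have not done. The paper's $h$-independent determinant sidesteps the issue entirely.
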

\begin{rmk}\label{trois points} Lemma \ref{lambda generique} does not exclude the possibility of triple points (i.e. three disks meeting at a point, every two of them transversally): we will see that later. 
\end{rmk}
\begin{proof}
The proof is based on the Transversality Lemma. We introduce
$$\Phi:\mathbb{C}\times\mathbb{C}\times \mathbb{D}\times \mathbb{D}\longrightarrow \mathbb{R}^4\oplus \mathbb{R}^4$$
\begin{equation}\label{definition de Phi pour les bandes}
\Phi(\lambda,\mu,w_1,w_2)=(F_{\lambda,\mu} (w_1), F_{\lambda,\mu}(w_2))
\end{equation}
and we check that it is transverse to the diagonal $\Delta_8$ of $\mathbb{R}^4\oplus \mathbb{R}^4$ for $w_1\neq w_2$. We derive from the basis $(e_1,e_2,e_3,e_4)$ of $\mathbb{R}^4$ (in which (\ref{branched immersion}) is written) a basis 
$$(e_1^{(1)},...,e_4^{(1)},e_1^{(2)},...,e_4^{(2)})$$ of
$\mathbb{R}^4\oplus \mathbb{R}^4$; thus the diagonal $\Delta_8$ is generated by 
$(e_1^{(1)}+e_1^{(2)},...,e_4^{(1)}+e_4^{(2)})$. \\
A point in the preimage of $\Delta_8$ {\it via} $\Phi$ is of the form $(\lambda,\mu,w_1,w_2)$, where $w_2=\nu w_1$ for a complex number $\nu$ verifying 
$$\nu^N=1.$$ We introduce real coordinates by setting
\begin{equation}\label{coordinates}
\lambda=\lambda_1+\lambda_2\ \ \ \ \ \ w_1=x_1+iy_1\ \ \ \ \ \ w_2=x_2+iy_2
\end{equation}
and we compute the following determinant at points $w_1, w_2=\nu w_1$; the subscripts denote the components in the  basis $(e_1,e_2,e_3,e_4)$:
$$det(\frac{\partial\Phi}{\partial x_1}, \frac{\partial\Phi}{\partial x_2}, \frac{\partial\Phi}{\partial \lambda_1}, \frac{\partial\Phi}{\partial \lambda_2},e_1^{(1)}+e_1^{(2)},e_2^{(1)}+e_2^{(2)},e_3^{(1)}+e_3^{(2)},e_4^{(1)}+e_4^{(2)})=$$
\[\left |\begin{array}{cccccccc}
(\frac{\partial F}{\partial x_1})_1&0&0&0&1&0&0&0\\
(\frac{\partial F}{\partial x_1})_2&0&0&0&0&1&0&0\\
(\frac{\partial F}{\partial x_1})_3&0&x_1&-y_1&0&0&1&0\\
(\frac{\partial F}{\partial x_1})_4&0&y_1&x_1&0&0&0&1\\
0&(\frac{\partial F}{\partial x_2})_1&0&0&1&0&0&0\\
0&(\frac{\partial F}{\partial x_2})_2&0&0&0&1&0&0\\
0&(\frac{\partial F}{\partial x_2})_3&x_2&-y_2&0&0&1&0\\
0&(\frac{\partial F}{\partial x_2})_4&y_2&x_2&0&0&0&1\\
\end{array}
\right |=
\left |\begin{array}{cccc}
(\frac{\partial F}{\partial x_1})_1&-(\frac{\partial F}{\partial x_2})_1&0&0\\
\\
(\frac{\partial F}{\partial x_1})_2&-(\frac{\partial F}{\partial x_2})_2&0&0\\
\\
(\frac{\partial F}{\partial x_1})_3&-(\frac{\partial F}{\partial x_2})_3&x_1-x_2&y_2-y_1\\
\\
(\frac{\partial F}{\partial x_1})_4&-(\frac{\partial F}{\partial x_2})_4&y_1-y_2&x_1-x_2\\
\end{array}
\right |\]
$$=[(x_1-x_2)^2+(y_1-y_2)^2][-(\frac{\partial F}{\partial x_1})_1(\frac{\partial F}{\partial x_2})_2+
(\frac{\partial F}{\partial x_2})_1(\frac{\partial F}{\partial x_1})_2]$$
\begin{equation}\label{formule finale avec l'imaginaire pour le gros determinant}
=|1-\nu|^2N^2|w|^{2N}Im(\nu).
\end{equation}
Similarly we show
$$det(\frac{\partial\Phi}{\partial x_1}, \frac{\partial\Phi}{\partial y_2}, \frac{\partial\Phi}{\partial \lambda_1}, 
\frac{\partial\Phi}{\partial \lambda_2},e_1^{(1)}+e_1^{(2)},e_2^{(1)}+e_2^{(2)},e_3^{(1)}+e_3^{(2)},e_4^{(1)}+e_4^{(2)})=$$
\begin{equation}\label{formule finale avec la partie reelle pour le gros determinant}
=-|1-\nu|^2N^2|w|^{2N}Re(\nu)
\end{equation}
Lemma \ref{lambda generique} follows from (\ref{formule finale avec l'imaginaire pour le gros determinant}) and (\ref{formule finale avec la partie reelle pour le gros determinant}).
\end{proof}
We denote by $\Pi_3$ the $3$-plane $\Pi_3$ generated by the first $3$ coordinates; we let
\begin{equation}
\pi_3:\mathbb{R}^4\longrightarrow \Pi_3
\end{equation}
be the orthogonal projection and we show a lemma similar to Lemma \ref{lambda generique} for $\pi_3\circ F_{\lambda,\mu}$:
\begin{lem}\label{points fixes ds les 3 premieres coordonnes}
For generic $\lambda,\mu$, the following is true:\\
if $w_1,w_2\in\mathbb{D}$ verify $w_1\neq w_2$ and $(\pi_3\circ F_{\lambda,\mu})(w_1)=(\pi_3\circ F_{\lambda,\mu})(w_2)$, then the two tangent planes $\pi_3\big(dF_{\lambda,\mu}(w_1)(\mathbb{R}^2)\big)$ and $\pi_3\big(dF_{\lambda,\mu}(w_2)(\mathbb{R}^2)\big)$ are transverse.
\end{lem}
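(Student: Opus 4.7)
The plan is to parallel the proof of Lemma \ref{lambda generique}, with target $\mathbb{R}^3\oplus\mathbb{R}^3$ replacing $\mathbb{R}^4\oplus\mathbb{R}^4$, and then to invoke the Transversality Lemma in its parametric form. I would introduce
$$\tilde\Phi(\lambda,\mu,w_1,w_2)=\big(\pi_3\circ F_{\lambda,\mu}(w_1),\,\pi_3\circ F_{\lambda,\mu}(w_2)\big)$$
as a map $\mathbb{C}\times\mathbb{C}\times\mathbb{D}\times\mathbb{D}\to\mathbb{R}^3\oplus\mathbb{R}^3$, and show that $\tilde\Phi$ is transverse to the diagonal $\Delta_6\subset\mathbb{R}^3\oplus\mathbb{R}^3$ at every preimage point with $w_1\neq w_2$. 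The Transversality Lemma then yields, for generic $(\lambda,\mu)$, the transversality to $\Delta_6$ of the slice $(w_1,w_2)\mapsto\tilde\Phi(\lambda,\mu,w_1,w_2)$; and this slice transversality is exactly the condition that the two projected tangent planes $\pi_3\big(dF_{\lambda,\mu}(w_i)(\mathbb{R}^2)\big)$ span $\mathbb{R}^3$, i.e.\ are transverse, which is the desired conclusion.

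For the transversality of $\tilde\Phi$ itself, I would run a $6\times 6$ determinant computation in the spirit of Lemma \ref{lambda generique}. At a preimage point one still has $w_2=\nu w_1$ with $\nu^N=1$, $\nu\neq 1$ (forced by equality of the first two coordinates $w_i^N$). I would form the matrix whose columns are $\partial\tilde\Phi/\partial x_1$, $\partial\tilde\Phi/\partial x_2$, $\partial\tilde\Phi/\partial\lambda_1$ together with the three diagonal vectors $e_j^{(1)}+e_j^{(2)}$, $j=1,2,3$. After the block row operation $R_{i+3}\leftarrow R_{i+3}-R_i$ (which cleared the diagonal directions in the analogous matrix of Lemma \ref{lambda generique}), the determinant reduces to that of a $3\times 3$ block, and expansion along its last column yields, using the identity $\nu^{N-1}=\bar\nu$, a determinant proportional to
$$(x_1-x_2)\,N^2\,|w_1|^{2(N-1)}\,\mathrm{Im}(\nu).$$
Three analogous choices, swapping $\partial\tilde\Phi/\partial x_i$ for $\partial\tilde\Phi/\partial y_i$ and/or $\partial\tilde\Phi/\partial\lambda_1$ for $\partial\tilde\Phi/\partial\lambda_2$, produce determinants with $(y_1-y_2)$ and/or $\mathrm{Re}(\nu)$ in place of their counterparts.

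Since $w_1\neq w_2$ forces at least one of $x_1-x_2$, $y_1-y_2$ to be nonzero, and since $\nu\neq 1$ on the unit circle forces at least one of $\mathrm{Re}(\nu)$, $\mathrm{Im}(\nu)$ to be nonzero (the only delicate case being $\nu=-1$ for $N$ even, where $\mathrm{Re}(\nu)=-1$ saves the day), at least one of the four determinants is nonzero at every preimage point. Hence $d\tilde\Phi$ together with $T\Delta_6$ spans $T(\mathbb{R}^3\oplus\mathbb{R}^3)$, and $\tilde\Phi$ is transverse to $\Delta_6$. The main (mild) obstacle is only the bookkeeping of the determinant expansion and the verification of $\nu^{N-1}=\bar\nu$; conceptually no new ingredient beyond Lemma \ref{lambda generique} is needed, since dropping one coordinate in the target only makes transversality easier to establish.
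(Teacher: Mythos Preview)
Your proof is correct and follows the same approach as the paper: introduce the map $\tilde\Phi$ (called $\Psi$ there), verify its transversality to $\Delta_6$ by computing $6\times 6$ determinants obtained by truncating those of Lemma~\ref{lambda generique}, and invoke the parametric Transversality Lemma. The paper records only the two determinants with $\partial/\partial\lambda_1$ and $\partial/\partial\lambda_2$ and writes them as $(x_1-x_2)N^2|w|^{2N-2}$ and $-(y_1-y_2)N^2|w|^{2N-2}$, omitting the $\mathrm{Im}(\nu)$ factor you obtained; your four-determinant version, with the explicit handling of the case $\nu=-1$, is therefore arguably the more careful of the two.
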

\begin{proof}
Similarly to above, we introduce the map
$$\Psi:\mathbb{C}\times \mathbb{C}\times\mathbb{D}\times\mathbb{D}\longrightarrow \mathbb{R}^3\oplus\mathbb{R}^3$$
\begin{equation}
\Psi:(\lambda,\mu,w_1,w_2)\mapsto\Big((\pi_3\circ F_{\lambda,\mu})(w_1), (\pi_3\circ F_{\lambda,\mu})(w_2)\Big)
\end{equation}
By truncating the determinants appearing in the proof of Lemma \ref{lambda generique}, we get
$$det(\frac{\partial\Psi}{\partial x_1}, \frac{\partial\Psi}{\partial x_2}, \frac{\partial\Psi}{\partial \lambda_1}, 
e_1^{(1)}+e_1^{(2)},e_2^{(1)}+e_2^{(2)},e_3^{(1)}+e_3^{(2)})=(x_1-x_2)N^2|w|^{2N-2}$$
$$det(\frac{\partial\Psi}{\partial x_1}, \frac{\partial\Psi}{\partial x_2}, \frac{\partial\Psi}{\partial \lambda_2}, 
e_1^{(1)}+e_1^{(2)},e_2^{(1)}+e_2^{(2)},e_3^{(1)}+e_3^{(2)})=-(y_1-y_2)N^2|w|^{2N-2}$$
Hence $\Psi$ is transverse to the diagonal $\Delta_6$ of $\mathbb{R}^3\oplus\mathbb{R}^3$ and Lemma \ref{points fixes ds les 3 premieres coordonnes} follows.
\end{proof}
Next we show that $\pi_3\circ F_{\lambda,\mu}$ has only a finite number of triple points:
\begin{lem}\label{N moins un preimages}
Let 
\begin{equation}
\nu=e^{\frac{2\pi }{N}i}
\end{equation}
 and let $k,l$ be two different integers in $\{1,...,N-1\}$. For generic $\lambda, \mu$, 
there is a finite number of points $w\in\mathbb{D}$ such that
\begin{equation}\label{triple points}
Re\big(\lambda w+\mu \bar{w}+h(w)\big)=Re\big(\lambda \nu^k w+\mu\bar{\nu}^k\bar{w}+h(\nu^k w)\big)=Re\big(\lambda \nu^l w+\mu\bar{\nu}^l\bar{w}+h(\nu^l w)\big)
\end{equation}
\end{lem}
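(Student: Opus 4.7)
The plan is to apply parametric transversality exactly as in Lemmas \ref{lambda generique} and \ref{points fixes ds les 3 premieres coordonnes}. I introduce the map
$$G:\mathbb{C}\times\mathbb{C}\times\mathbb{D}\longrightarrow\mathbb{R}^2,\qquad G(\lambda,\mu,w)=\bigl(R(w)-R(\nu^k w),\ R(w)-R(\nu^l w)\bigr),$$
where $R(w):=Re\bigl(\lambda w+\mu\bar w+h(w)\bigr)$. The lemma is equivalent to the statement that, for generic $(\lambda,\mu)$, the fibre $G^{-1}_{\lambda,\mu}(0)$ is finite.

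The main step is to show that $0\in\mathbb{R}^2$ is a regular value of $G$ on $\mathbb{C}^2\times(\mathbb{D}\setminus\{0\})$. I would do this by differentiating only in the $\mu$-variables: writing $\mu=\mu_1+i\mu_2$, one has $R(\nu^j w)=\mu_1\,Re(\nu^j w)+\mu_2\,Im(\nu^j w)+(\text{terms not involving }\mu)$, so the $\mu$-part of $dG$ is the $2\times 2$ matrix whose rows record the real and imaginary parts of $(1-\nu^k)w$ and $(1-\nu^l)w$; a direct computation gives its determinant as
$$|w|^2\,Im\bigl(\overline{(1-\nu^k)}(1-\nu^l)\bigr).$$

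The crux is then the algebraic fact that, for distinct $k,l\in\{1,\dots,N-1\}$, this imaginary part is nonzero. Expanding and applying standard sum-to-product identities should reduce it, up to a nonzero constant, to $\sin(\pi k/N)\sin(\pi l/N)\sin\bigl(\pi(k-l)/N\bigr)$, which is clearly nonzero under the hypotheses on $k$ and $l$. This is the only place where the specific combinatorics of $k,l$ enter, and is the step I would check most carefully; I expect it to be the main (though modest) obstacle.

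Granted the regular-value property, parametric transversality yields, for generic $(\lambda,\mu)$, that $G^{-1}_{\lambda,\mu}(0)\cap(\mathbb{D}\setminus\{0\})$ is a $0$-dimensional real analytic subset, hence discrete. After possibly shrinking $\mathbb{D}$ (as is already tacitly done in \S 1.2) so that $\overline{\mathbb{D}}$ is compact, this discrete real analytic set is finite; adjoining the isolated point $w=0$ gives the conclusion.
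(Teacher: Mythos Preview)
Your argument is correct and essentially identical to the paper's: both set up the same two-component difference map into $\mathbb{R}^2$, apply parametric transversality, and reduce the nonvanishing of the relevant $2\times 2$ Jacobian to the identity $Im\bigl((1-\nu^k)(1-\bar\nu^l)\bigr)=4\sin(\pi k/N)\sin(\pi l/N)\sin(\pi(k-l)/N)$. The only cosmetic difference is that the paper differentiates in $\lambda$ while you differentiate in $\mu$, which merely flips the sign of the determinant.
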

\begin{proof}
We let
$$\psi:\mathbb{C}\times\mathbb{D}\longrightarrow\mathbb{R}^2$$
$$\psi(\lambda,w)=
\Big(Re\big[\lambda (1-\nu^k) w+\mu (1-\bar{\nu}^k)\bar{w}+h(w)-h(\nu^k w)],$$
$$Re[\lambda (1-\nu^l) w+\mu (1-\bar{\nu}^l)\bar{w}+h(w)-h(\nu^l w)]\Big).$$
We define
\begin{equation}
(1-\nu^k)w=w_1^{(k)}+iw_2^{(k)}\ \ \ \ (1-\nu^l)w=w_1^{(l)}+iw_2^{(l)}
\end{equation}
and compute

\[ det(\frac{\partial \psi}{\partial \lambda_1},\frac{\partial \psi}{\partial \lambda_2})=
\left| \begin{array}{cc}
w_1^{(k)}&-w_2^{(k)}\\
w_1^{(l)}&-w_2^{(l)}
\end{array}\right|=Im[(1-\nu^k)w(1-\bar{\nu}^l)\bar{w}]
\]

$$
=|w|^2\Big[\sin ({\frac{2\pi}{N}l})- \sin ({\frac{2\pi}{N}k}) +\sin \big({2\frac{\pi}{N}(k-l)}\big)\Big]
$$
\begin{equation}\label{determinant ds R2}
=4|w|^2\sin ({\frac{\pi}{N}l}) \sin ({\frac{\pi}{N}k}) \sin \big({\frac{\pi}{N}(k-l)}\big)
\end{equation}
which is not zero. We use the Transversality Lemma again and conclude that for a generic $\lambda$, $\psi(\lambda,.)$ is transverse to $(0,0)$, that is, $(0,0)$ is attained at a finite number of points. 
\end{proof}
NOTATIONS. We remind the reader that $\pi_2$ is the projection onto the plane $\Pi_2$ generated by the first two coordinates. \\
We denote by $W_{\lambda,\mu}$ the set of $w$'s in $\mathbb{D}\backslash\{0\}$ which verify (\ref{triple points}) for some $k,l$ and we let
\begin{equation}\label{definition de X lambda mu}
X_{\lambda,\mu}=(\pi_2\circ F_{\lambda,\mu})(W_{\lambda,\mu})
\end{equation} 
If $D_{\lambda,\mu}\in\mathbb{R}^4$ is the set of double points of $F_{\lambda,\mu}$, we let 
\begin{equation}\label{definition de la projection des points doubles}
{\mathcal D}_{\lambda,\mu}=\pi_2(D_{\lambda,\mu})
\end{equation}
In the following lemma, we use $F_{\lambda,\mu,\gamma}$ defined in (\ref{F lambda mu gamma}); nevertheless we keep the notation 
${\mathcal D}_{\lambda,\mu}$ and $X_{\lambda,\mu}$ in order not to burden the notations.
\begin{lem}\label{pas d'intersection des ensembles finis}
For generic $\lambda$'s, $\mu$'s, $\gamma$ 
$${\mathcal D}_{\lambda,\mu}\cap X_{\lambda,\mu}=\varnothing$$
In particular $F_{\lambda,\mu}$ does not have triple points (cf. Remark \ref{trois points}).
\end{lem}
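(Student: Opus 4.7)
The plan is to extend the transversality strategy of Lemmas \ref{lambda generique}, \ref{points fixes ds les 3 premieres coordonnes}, and \ref{N moins un preimages}, now using all six real parameters contained in $(\lambda,\mu,\gamma)$. First I would observe that a point $q\in{\mathcal D}_{\lambda,\mu}\cap X_{\lambda,\mu}$ is witnessed by three nonzero points $w_0,w_1,w_2\in\mathbb{D}$ together with integers $j\in\{1,\dots,N-1\}$, distinct $k,l\in\{1,\dots,N-1\}$, and $p\in\{0,\dots,N-1\}$ such that $w_1=\nu^p w_0$, $w_2=\nu^{j+p}w_0$, $F_{\lambda,\mu,\gamma}(w_1)=F_{\lambda,\mu,\gamma}(w_2)$, and $w_0$ satisfies (\ref{triple points}) for this $(k,l)$. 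Only finitely many integer tuples arise, so it is enough to treat each fixed tuple separately.

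For fixed $(j,k,l,p)$, I would consider the parametric map
$$\Theta:\mathbb{C}^3\times\mathbb{D}\longrightarrow\mathbb{R}^4$$
whose four real components record (i) the real and imaginary parts of the complex double-point equation $A(\nu^p w_0)-A(\nu^{j+p}w_0)=0$ and (ii) the two real equalities $\operatorname{Re}[A(w_0)-A(\nu^k w_0)]=0$ and $\operatorname{Re}[A(w_0)-A(\nu^l w_0)]=0$, where $A(w):=h(w)+\lambda w+\mu\bar{w}+\operatorname{Re}(\gamma w^2)$. Once $\Theta$ is shown to be transverse to $0\in\mathbb{R}^4$, the Transversality Lemma will imply that for generic $(\lambda,\mu,\gamma)\in\mathbb{C}^3$, the restricted map $\Theta(\lambda,\mu,\gamma,\cdot):\mathbb{D}\to\mathbb{R}^4$ is transverse to $0$; since $\dim\mathbb{D}=2<4=\dim\mathbb{R}^4$, this means the restricted preimage is empty, proving the claim.

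The main obstacle is verifying the transversality of $\Theta$ at each of its zeros. The partial derivatives $\partial_{\lambda}\Theta$ and $\partial_{\mu}\Theta$ involve the factors $(1-\nu^j)w_0$, $(1-\nu^k)w_0$, $(1-\nu^l)w_0$ and their $\bar w_0$-conjugates, exactly in the spirit of the determinants in the previous three lemmas; the derivative $\partial_\gamma\Theta$ contributes the quadratic terms $(1-\nu^{2j})w_0^2$, $(1-\nu^{2k})w_0^2$, $(1-\nu^{2l})w_0^2$ to three of the four components (the imaginary part of (i) is untouched by $\gamma$, since $\gamma$ enters $A$ only through its real part, but that component is already controlled by $\lambda,\mu$). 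This is precisely the role of $\gamma$: on the real-analytic locus where the linear data in $\lambda,\mu$ alone fails to span the target $\mathbb{R}^4$, the extra quadratic data restores surjectivity. The remaining work is a determinant computation, analogous to (\ref{formule finale avec l'imaginaire pour le gros determinant}), (\ref{formule finale avec la partie reelle pour le gros determinant}), and (\ref{determinant ds R2}), exhibiting at every zero of $\Theta$ a non-vanishing $4\times 4$ minor of the $4\times 6$ parameter-derivative block; the genuinely delicate point is to confirm that the quadratic correction breaks the linear degeneracies for each of the finitely many tuples $(j,k,l,p)$ and not only in generic position.

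Finally, the ``in particular'' claim is immediate: a triple point $p=F_{\lambda,\mu,\gamma}(w_1)=F_{\lambda,\mu,\gamma}(w_2)=F_{\lambda,\mu,\gamma}(w_3)$ with distinct $w_i$ yields a double point $(w_1,w_2)$, placing $\pi_2(p)$ in ${\mathcal D}_{\lambda,\mu}$; projecting the threefold equality to $\pi_3$ shows that $w_1$ satisfies (\ref{triple points}), so $w_1\in W_{\lambda,\mu}$ and $\pi_2(p)$ also lies in $X_{\lambda,\mu}$, contradicting ${\mathcal D}_{\lambda,\mu}\cap X_{\lambda,\mu}=\varnothing$.
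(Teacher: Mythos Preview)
Your strategy—encode the coincidence $\mathcal{D}_{\lambda,\mu}\cap X_{\lambda,\mu}\neq\varnothing$ as the zero set of a parametric map and invoke the Transversality Lemma over the family $(\lambda,\mu,\gamma)$—is exactly the paper's. The paper likewise reduces to a finite list of integer tuples, builds the analogue of your $\Theta$, and then carries out the determinant check; your final paragraph deducing the absence of triple points is also correct.

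There is one substantive difference worth noting. You impose \emph{four} real equations (both halves of the double-point identity plus both $Re$-equalities from (\ref{triple points})), aiming at a target $\mathbb{R}^4$. The paper keeps only \emph{three}: the complex double-point identity together with a \emph{single} equation $Re\big[A(\nu^k w)-A(\nu^l w)\big]=0$. This is legitimate because any $w\in W_{\lambda,\mu}$ already furnishes a pair of indices with equal third coordinate, so the paper is proving emptiness of a set that \emph{contains} the one you describe. Dropping one equation makes the verification markedly lighter: only a $3\times 3$ minor is needed, and the explicit computation shows that the linear parameters alone (the paper's $a,b,d,e$) suffice except on the locus $j\equiv k+l\pmod N$, where the quadratic parameter (split as $\alpha,\beta$ via $Re(w^2)$ and $Re(e^{i\eta}w^2)$) is brought in and the product of sines (\ref{avalanche de sinus}) settles matters.

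Your proposal leaves precisely this computation open (``the remaining work is a determinant computation\dots the genuinely delicate point is to confirm\dots''), and with four equations the bookkeeping of degenerate tuples is strictly worse: you must exhibit a nonvanishing $4\times 4$ minor, the two $\gamma$-columns touch only three of your four rows, and there is no a priori reason the analogue of the $j=k+l$ obstruction does not proliferate. The argument can likely be pushed through, but the paper's three-equation reduction is the simplification that turns the determinant check from ``delicate'' into routine.
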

\begin{proof}
We pick a very small positive number $\eta$ (how small it need to be will be clear from the proof below) and
given a $7$-uple 
$A=(a,b,c,d,e,\alpha,\beta)\in\mathbb{R}^7$, we define
\begin{equation}\label{fonctions}
H(A,w)=a Re(w)+b Im (w) +\alpha Re(w^2)+\beta Re(e^{i\eta}w^2)+i[d Re(w)+ e Im (w)]
\end{equation}
and we define
\begin{equation}\label{fonctions2}
F(A,w)=F(w)+\big(0,H(A,w)\big)=\big(w^N,H(A,w)+h(w)\big)
\end{equation}
We let $j,k,l$ be three different integers in $\{1,...,N-1\}$
and we define
$$S(A,w)=\Big(Re\big(H(A,w)-H(A,e^{\frac{2j\pi}{N}i}w)\big)+Re\big(h(w)-h(e^{\frac{2j\pi}{N}i}w)\big),$$
$$Im\big(H(A,w)-H(A,e^{\frac{2j\pi}{N}i}w)\big)+Im\big(h(w)-h(e^{\frac{2j\pi}{N}i}w)\big),$$
$$Re\big(H(A,e^{\frac{2k\pi}{N}i}w)-H(A,e^{\frac{2l\pi}{N}i}w)\big)+Re\big(h(e^{\frac{2k\pi}{N}i}w)-h(e^{\frac{2l\pi}{N}i}w)\big)\Big)$$
We show
\begin{slem}\label{map avec les alpha et beta}
The map $S$ is transverse to $(0,0,0)$; thus, for a generic $A$, $(0,0,0)$ is not attained by $S(A,.)$. 
\end{slem}
\begin{proof}
We set $w=r e^{i\theta}$ and we compute
$$det(\frac{\partial S}{\partial a}, \frac{\partial S}{\partial b},\frac{\partial S}{\partial d})=r^3\Big(\cos\theta-\cos(\theta+\frac{2 j\pi}{N})\Big)\Delta=-2\sin(\frac{j\pi}{N})\sin(\theta+\frac{ j\pi}{N})r^3\Delta
$$
$$det(\frac{\partial S}{\partial a}, \frac{\partial S}{\partial b},\frac{\partial S}{\partial e})=
r^3\Big(\sin\theta-\sin(\theta+\frac{2 j\pi}{N})\Big)\Delta=-2\sin(\frac{j\pi}{N})\cos(\theta+\frac{ j\pi}{N})r^3\Delta
$$
where $$\Delta=
\left|\begin{array}{cc}

 \cos\theta-\cos(\theta+\frac{2j\pi}{N})&\sin\theta-\sin(\theta+\frac{2j\pi}{N})\\
 \cos(\theta+\frac{2k\pi}{N})-\cos(\theta+\frac{2l\pi}{N})&\sin(\theta+\frac{2k\pi}{N})-\sin(\theta+\frac{2l\pi}{N})

 \end{array}\right|$$
 $$=
 \left|\begin{array}{cc}

 2\sin(\theta+\frac{j\pi}{N})\sin(\frac{j\pi}{N})&2\cos(\theta+\frac{j\pi}{N})\sin(\frac{j\pi}{N})\\
 -2\sin(\theta+\frac{(k+l)\pi}{N})\sin(\frac{ (k-l)\pi}{N})&2\cos(\theta+\frac{(k+l)\pi}{N})\sin(\frac{(k-l)\pi}{N})

 \end{array}\right|$$
 \begin{equation}\label{sinus qui peut etre nul}
 =4\sin(\frac{\pi}{N}j)\sin\Big(\frac{\pi}{N}(k-l)\Big)\sin\Big(\frac{\pi}{N}(j-k-l)\Big)
 \end{equation}
 If (\ref{sinus qui peut etre nul}) is zero, then 
 \begin{equation}\label{le premier sinus est nul}
j=k+l
 \end{equation}
We now assume (\ref{le premier sinus est nul}) and we compute 
$$det(\frac{\partial S}{\partial a}, \frac{\partial S}{\partial \alpha},\frac{\partial S}{\partial d})=r^4\Big(\cos\theta-\cos(\theta+\frac{2 j\pi}{N})\Big)\tilde{\Delta}=-2\sin(\frac{j\pi}{N})\sin(\theta+\frac{ j\pi}{N})r^4\tilde{\Delta}
$$
$$det(\frac{\partial S}{\partial a}, \frac{\partial S}{\partial \alpha},\frac{\partial S}{\partial e})=
r^4\Big(\sin\theta-\sin(\theta+\frac{2 j\pi}{N})\Big)\tilde{\Delta}=-2\sin(\frac{j\pi}{N})\cos(\theta+\frac{ j\pi}{N})r^4\tilde{\Delta}
$$
where
$$\tilde{\Delta}=
\left|\begin{array}{cc}

 \cos\theta-\cos(\theta+\frac{2j\pi}{N})&\cos (2\theta)-\cos(2\theta+\frac{4j\pi}{N})\\
 \cos(\theta+\frac{2k\pi}{N})-\cos(\theta+\frac{2l\pi}{N})&\cos(2\theta+\frac{4k\pi}{N})-\cos(2\theta+\frac{4l\pi}{N})

 \end{array}\right|$$ 
$$=
 \left|\begin{array}{cc}

 2\sin(\theta+\frac{j\pi}{N})\sin(\frac{j\pi}{N})&2\sin(2\theta+\frac{2j\pi}{N})\sin(\frac{2j\pi}{N})\\
 -2\sin(\theta+\frac{j\pi}{N})\sin(\frac{ (k-l)\pi}{N})&-2\sin(2\theta+\frac{2j\pi}{N})\sin(\frac{2(k-l)\pi}{N})

 \end{array}\right|$$
 $$=4\sin(\theta+\frac{j\pi}{N})\sin(2\theta+\frac{2j\pi}{N})
 \left|\begin{array}{cc}

 \sin(\frac{j\pi}{N})&\sin(\frac{2j\pi}{N})\\
 -\sin(\frac{ (k-l)\pi}{N})&-\sin(\frac{2(k-l)\pi}{N})

 \end{array}\right|$$ 
\begin{equation}\label{avalanche de sinus}
=-16\sin(\theta+\frac{j\pi}{N})\sin(2\theta+\frac{2j\pi}{N})\sin(\frac{j\pi}{N})\sin(\frac{ (k-l)\pi}{N})\sin(\frac{ l\pi}{N})\sin(\frac{ k\pi}{N})
\end{equation} 
The product (\ref{avalanche de sinus}) is not zero unless 
$$\sin(\theta+\frac{j\pi}{N})\sin(2\theta+\frac{2j\pi}{N})=0$$
in which case we redo the above calculations replacing $\frac{\partial}{\partial\alpha}$ by $\frac{\partial}{\partial\beta}$ and get a non-zero determinant. This concludes the proof of
Sublemma \ref{map avec les alpha et beta}. 
 \end{proof} 
 Given $A$, there is a unique $(\lambda,\mu,\gamma)$ such that for every $w$,
 \begin{equation}\label{lambda et mu viennent de A}
 H(A,w)=\lambda w+\mu\bar{w}+Re(\gamma w^2)
 \end{equation}
 Moreover the map 
 $$A\mapsto (\lambda,\mu,\gamma)$$
 defined by (\ref{lambda et mu viennent de A}) is a surjective submersion. Thus Lemma \ref{pas d'intersection des ensembles finis} follows from Sublemma \ref{map avec les alpha et beta}.
 \end{proof}
\section{The $1$-complex $A$ in $\Pi_2$}
We derive from Lemma \ref{points fixes ds les 3 premieres coordonnes} that the set
\begin{equation}
{\mathcal A}=\{(w_1,w_2)\in\mathbb{D}\times \mathbb{D}\slash w_1\neq w_2\ \ \mbox{and}\ \ 
\pi_3\circ F_{\lambda,\mu}(w_1)=\pi_3\circ F_{\lambda,\mu}(w_2)\}
\end{equation}
is a manifold. If $(w_1,w_2)\in\mathcal A$, then 
$$w_1^N=w_2^N$$
and we let $A$ be the subset of $\mathbb{D}\subset\Pi_2$ consisting of the $w_i^N$'s for $(w_1,w_2)$ in $\mathcal A$. Directly by hand or by standard analytic geometry arguments ($A$ is the projection of an anlytic set and is of dimension $1$, hence it is semi-analytic and so it is stratified, see [\L o]), we derive
\begin{lem}\label{structure de A chapeau}
The set $A$ is a $1$-submanifold of $\mathbb{D}$ with a finite set of singular points which we denote $\Sigma(A)$.
\end{lem}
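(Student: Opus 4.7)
I would analyze $A$ in two stages: first exhibit $\mathcal{A}\subset(\mathbb{D}\times\mathbb{D})\setminus\Delta$ as a real analytic $1$-submanifold, then push it forward to $A\subset\mathbb{D}$ under the natural $N$-th power map and appeal to real analyticity to control the singular locus.

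For the first stage, introduce the analytic map
$$T:(\mathbb{D}\times\mathbb{D})\setminus\Delta\longrightarrow\Pi_3,\ \ \ T(w_1,w_2)=\pi_3 F_{\lambda,\mu}(w_1)-\pi_3 F_{\lambda,\mu}(w_2),$$
so that $\mathcal{A}=T^{-1}(0)$. Its differential at $(w_1,w_2)$ is the block $d(\pi_3 F_{\lambda,\mu})(w_1)-d(\pi_3 F_{\lambda,\mu})(w_2)$, and has rank $3$ iff the two tangent planes $\pi_3\bigl(dF_{\lambda,\mu}(w_i)(\mathbb{R}^2)\bigr)$ span $\Pi_3$, i.e.\ are transverse in this $3$-space. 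By Lemma~\ref{points fixes ds les 3 premieres coordonnes}, this holds at every point of $\mathcal{A}$ for generic $(\lambda,\mu)$, so the implicit function theorem makes $\mathcal{A}$ a real analytic $1$-submanifold.

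For the second stage, define the real analytic map $p:\mathcal{A}\to\mathbb{D}$ by $p(w_1,w_2)=w_1^N=w_2^N$; its image is $A$. Since any $(w_1,w_2)\in\mathcal{A}$ satisfies $w_2=\nu w_1$ for some nontrivial $N$-th root of unity $\nu$, the fibres of $p$ have cardinality at most $N-1$ and $p$ is proper over relatively compact subdisks. The image of a $1$-dimensional real analytic set under a proper real analytic map is semi-analytic of dimension $\leq 1$, so Lojasiewicz's stratification theorem [\L o] provides a decomposition of $A$ into a smooth $1$-dimensional stratum and a locally finite $0$-dimensional stratum; the latter is $\Sigma(A)$ and is finite once we restrict to a bounded subdisk, which is all that is needed here.

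The main technical point is the passage from $\mathcal{A}$ to $A$, where two kinds of irregularity may appear: critical points of $p|_\mathcal{A}$ (producing cusps or vertical tangencies of arcs of $\mathcal{A}$) and transverse crossings of distinct arcs. Invoking Lojasiewicz handles both uniformly. A bare-hands alternative would require transversality arguments in the spirit of those carried out in Section~\ref{The family of immersions} to verify directly that the critical set of $p|_\mathcal{A}$ and the self-intersection set of $p$ are discrete analytic subsets of $\mathcal{A}$; I expect this to be the most tedious step if one wishes to avoid the subanalytic framework.
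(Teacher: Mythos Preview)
Your proposal is correct and follows essentially the same route as the paper: first use Lemma~\ref{points fixes ds les 3 premieres coordonnes} to make $\mathcal{A}$ a real analytic $1$-manifold, then push forward by $w\mapsto w^N$ and invoke \L ojasiewicz's stratification of semi-analytic sets to conclude. The paper's argument is more terse (one sentence citing [\L o] and offering ``directly by hand'' as an alternative), while you spell out the properness and dimension bookkeeping; one small slip is that the fibres of $p$ over $z\neq 0$ have cardinality at most $N(N-1)$, not $N-1$, since $w_1$ itself ranges over the $N$ preimages of $z$---but this does not affect the argument.
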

Moreover we have
\begin{lem}\label{q est un point regulier}
The elements of ${\mathcal D}_{\lambda,\mu}$ (cf. \ref{definition de la projection des points doubles}) are regular points of  $A$. 
\end{lem}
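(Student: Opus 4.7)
The plan is to show that at each $q \in \mathcal{D}_{\lambda,\mu}$, the set $A$ consists locally of exactly one smooth branch, which is the image of a smooth piece of $\mathcal{A}$ under an immersion, and that no other branch of $A$ meets $q$.

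First, pick $w_1 \neq w_2$ in $\mathbb{D}$ with $F_{\lambda,\mu}(w_1) = F_{\lambda,\mu}(w_2)$ and $q = w_1^N = w_2^N$. Observe that $q \neq 0$ (and hence $w_1, w_2 \neq 0$), since $F_{\lambda,\mu}(w) = F_{\lambda,\mu}(0)$ forces $w^N = 0$, so $w = 0$. Next, apply Lemma \ref{points fixes ds les 3 premieres coordonnes}: at $(w_1, w_2)$ the differential of $(u_1, u_2) \mapsto \pi_3 F_{\lambda,\mu}(u_1) - \pi_3 F_{\lambda,\mu}(u_2)$ is surjective onto $\mathbb{R}^3$, so $\mathcal{A}$ is a smooth $1$-submanifold of $\mathbb{D} \times \mathbb{D}$ in a neighborhood of $(w_1, w_2)$. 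Because the $\Pi_2$-component of $d F_{\lambda,\mu}(w_i)$ is $d(w^N)$, which is an isomorphism away from $w = 0$, the full differential $d(\pi_3 F_{\lambda,\mu})(w_i)$ is injective for $i=1,2$; therefore any nonzero tangent vector $(v_1, v_2)$ to $\mathcal{A}$ has both $v_1$ and $v_2$ nonzero. Consequently the restriction of $(u_1, u_2) \mapsto u_1^N$ to $\mathcal{A}$ is an immersion at $(w_1, w_2)$ with nonzero derivative $N w_1^{N-1} v_1$, and its image is a smooth curve $\alpha \subset \Pi_2$ through $q$.

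It remains to show $\alpha$ is the only local branch of $A$ at $q$. Any other pair $(u_1, u_2) \in \mathcal{A}$ with $u_1^N$ close to $q$ must satisfy $u_1 \approx \nu^j w_1$, $u_2 \approx \nu^k w_1$ for distinct $j, k \in \{0, \dots, N-1\}$, where $\nu = e^{2\pi i/N}$ and $w_2 = \nu^c w_1$. If $\{j,k\}$ meets $\{0,c\}$, then at $u_1 = w_1$ the relation $\pi_3 F_{\lambda,\mu}(\nu^j w_1) = \pi_3 F_{\lambda,\mu}(\nu^k w_1)$, combined with $\pi_3 F_{\lambda,\mu}(w_1) = \pi_3 F_{\lambda,\mu}(w_2)$, forces three preimages of $\pi_3 F_{\lambda,\mu}$ over $\pi_3 F_{\lambda,\mu}(w_1)$, putting $w_1 \in W_{\lambda,\mu}$ and $q \in X_{\lambda,\mu}$; this contradicts Lemma \ref{pas d'intersection des ensembles finis}. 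If instead $\{j,k\}$ is disjoint from $\{0,c\}$, I would exclude this "accidental" coincidence by a fresh transversality argument: the map $(\lambda, \mu, w) \mapsto \bigl(F_{\lambda,\mu}(w) - F_{\lambda,\mu}(\nu^c w), \pi_3 F_{\lambda,\mu}(\nu^j w) - \pi_3 F_{\lambda,\mu}(\nu^k w)\bigr) \in \mathbb{R}^4 \oplus \mathbb{R}^3$ should be shown transverse to $0$ by a determinant computation in the spirit of Lemmas \ref{lambda generique} and \ref{pas d'intersection des ensembles finis}, and the dimension count $6 < 7$ then forces the preimage to be empty for generic $(\lambda, \mu)$.

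The main obstacle is this final transversality for the disjoint-index case, which is not directly subsumed by the earlier lemmas; it requires combining a double-point determinant with a second $\pi_3$-coincidence determinant, and one may have to exploit the auxiliary parameter $\gamma$ from the enlarged family $F_{\lambda,\mu,\gamma}$ to obtain enough free directions. Once this is in place, $\alpha$ is the unique local component of $A$ through $q$, so $q \notin \Sigma(A)$.
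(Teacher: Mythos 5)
Your overall strategy is the same as the paper's: reduce to showing that the only local branch of $A$ through $q=w_1^N$ is the one coming from the double-point pair $\{0,c\}$, and that this branch is a smooth embedded arc. Your smoothness step (injectivity of $d(\pi_3\circ F_{\lambda,\mu})$ away from $0$, hence $(u_1,u_2)\mapsto u_1^N$ restricted to $\mathcal{A}$ is an immersion) is correct and is a more explicit version of the paper's one-line appeal to ``the transversality arguments we have been using''; likewise your exclusion of the case $\{j,k\}\cap\{0,c\}\neq\varnothing$ via the statement of Lemma \ref{pas d'intersection des ensembles finis} (a triple $\pi_3$-coincidence would put $q$ in ${\mathcal D}_{\lambda,\mu}\cap X_{\lambda,\mu}$) is exactly the paper's argument.

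The ``main obstacle'' you flag --- the disjoint-index case --- is not actually open: it is precisely what Sublemma \ref{map avec les alpha et beta}, proved inside Lemma \ref{pas d'intersection des ensembles finis}, rules out. The map $S(A,w)$ there encodes a genuine double point between leaves $0$ and $j$ (its first two components are the full difference of the second complex coordinates) together with a mere $\pi_3$-coincidence between leaves $k$ and $l$ (its third component), for three distinct nonzero indices $j,k,l$; taking $j=c$ and $\{k,l\}$ equal to your disjoint pair, genericity of $(\lambda,\mu,\gamma)$ makes this configuration empty. So the transversality computation you anticipate is the one the paper has already carried out, including the use of the quadratic terms (the $\alpha,\beta$ directions, i.e.\ the parameter $\gamma$) to handle the degenerate resonance $j=k+l$, exactly as you guessed. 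One correction to your sketch: the map you propose into $\mathbb{R}^4\oplus\mathbb{R}^3$ can never be transverse to $0$, because the $\Pi_2$-part of $F_{\lambda,\mu}(w)-F_{\lambda,\mu}(\nu^cw)$ vanishes identically ($w$ and $\nu^cw$ share the value $w^N$); one must discard those two components and target $\mathbb{R}^3$, where the count (a $2$-dimensional $w$-slice against $3$ independent equations) yields emptiness for generic parameters. With that reference supplied, your proof is complete and agrees with the paper's.
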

\begin{proof}
If $p\in\mathcal{D}_{\lambda,\mu}$, there exists $w\in\mathbb{D}$ and a number $\nu$ with $\nu^N=1$, $\nu\neq 1$ such that 
$p=w^N$ and 
\begin{equation}\label{encore et encore des estimees}
Re(\lambda w+\mu\bar{w}+h(w))=Re(\lambda \nu w+\mu\bar{\nu}\bar{w}+h(\nu w))
\end{equation}
It follows from Lemma \ref{pas d'intersection des ensembles finis} that in a neighbourhood of $p$, $A$ identifies with
$$A_\nu=\{w^N\slash \pi_3(F(w))=\pi_3(F(\nu w))\}.$$
By the transversality arguments we have been using, we see that, for $\lambda,\mu$ generic, the set of $w$'s which verify 
(\ref{encore et encore des estimees}) 
is a $1$-submanifold, hence $A_\nu$ is one too.
\end{proof}

\section{The loop $\Gamma$ in $\mathbb{D}$}
We now construct a closed loop $\Gamma$ in $\mathbb{D}$. It stays in a small circle around the origin but leaves it to circle around the the points of ${\mathcal D}_{\lambda,\mu}$. We require $\Gamma$ to always meet $A$ transversally. The closed loop $\Gamma$ splits $\mathbb{D}$ into two connected components, $U_0$ and $U_1$ and we require
\begin{itemize}
\item
the origin $0$ and the elements of ${\mathcal D}_{\lambda,\mu}$ are all in $U_0$
\item
the points of $X_{\lambda,\mu}$ (cf. \ref{definition de X lambda mu} for the definition of $X_{\lambda,\mu}$) are all in $U_1$: this is possible since $X_{\lambda,\mu}$ and ${\mathcal D}_{\lambda,\mu}$ do not intersect. 
\end{itemize}
We can now consider three knots in cylinders, namely
\begin{equation}
K=F(\mathbb{D})\cap \pi_2^{-1}(\partial\bar{\mathbb{D}}_2)\ \ \ \ \ K_{\lambda,\mu}=F_{\lambda,\mu}(\mathbb{D})\cap \pi_2^{-1}(\partial\bar{\mathbb{D}}_2)\ \ \ \ \ \hat{K}_{\lambda,\mu}=F_{\lambda,\mu}(\mathbb{D})\cap \pi_2^{-1}(\Gamma)
\end{equation}
We claim that they are all isotopic. For $K$ and $K_{\lambda,\mu}$ to be isotopic, it is enough to take $\lambda$ and $\mu$ small enough. \\
Since there are no double points in $\pi_2^{-1}(U_1)$, the set $M_1=F_{\lambda,\mu}(\mathbb{D})\cap \pi_2^{-1}(U_1)$ is a submanifold of $\mathbb{R}^4$; moreover, if $m\in M_1$, a vector $T$ in $\Pi_2$ has a unique lift in $T_mM_1$. Thus, if we smoothly deform $\Gamma$ to $\partial \bar{\mathbb{D}}$, we can lift this deformation into an isotopy between $K_{\lambda,\mu}$ and $\hat{K}_{\lambda,\mu}$.
\subsection{Construction of $\Gamma$}
It is made of three pieces:
\subsubsection{The circles $\Gamma_i$'s around the  points in ${\mathcal D}_{\lambda,\mu}$}\label{les cercles Gamma}
We let
\begin{equation} 
{\mathcal D}_{\lambda,\mu}=\{p_1,...p_n\}
\end{equation}
The indexing $i$ is chosen so that 
\begin{equation}
arg(p_1)\geq arg(p_1)\geq...\geq arg(p_n)
\end{equation}
For every $i=1,...,n$, the point $p_i$ is a regular point of $A$ (cf. Lemma \ref{q est un point regulier}) so we can pick a small circle 
$\Gamma_i$ in $\mathbb{D}\subset\Pi_2$ centered at $p_i$ and such that 
\begin{enumerate}
\item
the disk bounded by $\Gamma_i$ does not contain any point in $\Sigma(A)$ or a point in ${\mathcal D}_{\lambda,\mu}$ different from $p_i$
\item
$\Gamma_i$ and $A$ meet transversally at two points:
\begin{equation}\label{two points}
\Gamma_i\cap A=\{P_i,Q_i\}
\end{equation}
\end{enumerate}
\subsubsection{The circle $C_\rho$ around the origin}
We pick a small positive number $\rho$; we will indicate below how small we need $\rho$ to be but for the moment we only require 
\begin{equation}
\rho<\frac{1}{2}\inf |p_i|
\end{equation}
and we let $C_\rho$ be the circle in $\mathbb{D}_2$ centered at the origin and of radius $\rho$. 
\subsubsection{The ${\mathcal T}_i$'s between $C_\rho$ and the $\Gamma_i$'s}\label{les mathcal T}
We pick a point $u_i$ on
$\Gamma_i$ different from $P_i,Q_i$.
For every $i$, we pick a path $L_i$  between $u_i$ and $C_\rho$ and a small closed tubular neighbourhood ${\mathcal T}_i$ of $L_i$. \\
We pick the $\mathcal T_i$'s disjoint from one another. Moreover we require for every $i$ that
\begin{enumerate}
\item
$\mathcal T_i$ does not contain any point of ${\mathcal D}_{\lambda,\mu}$ or $\Sigma(A)$
\item
$\mathcal T_i\cap C_\rho\cap A=\varnothing$
\item
$\mathcal T_i\cap \Gamma_i$ does not contain $P_i$ and $Q_i$
\item
the boundary $\partial\mathcal T_i$ meets $A$
transversally.
\end{enumerate}
\subsubsection{Conclusion: the loop $\Gamma$ and the knot/braid $\hat{K}_\lambda$}
To go along the loop $\Gamma$, we start at a point $X_0$ in $C_\rho$ which does not belong to $A$. We follow $C_\rho$ counterclockwise; everytime we meet a ${\partial\mathcal T}_i$, we go along it until we meet $\Gamma_i$; then we follow $\Gamma_i$ till we come to the next component of ${\partial\mathcal T}_i$ which we follow back to $C_\rho$. 
\section{The crossing points of $\hat{K}_{\lambda}$}
We now write $\hat{K}_{\lambda}$ as a braid $\beta$.\\
We denote by $\Pi_{34}$ the plane in $\mathbb{R}^4$ generated by the last two coordinates.
If $\gamma$ is a point of $\Gamma$,  there are $N$ points $\tilde{\gamma}_1,\tilde{\gamma}_2,...,\tilde{\gamma}_N$ in $\Pi_{34}$ such that for all $i$,
$$(\gamma,\tilde{\gamma}_i)\in\hat{K}_{\lambda}$$ 
A crossing point $\gamma^{(0)}$ of  $\hat{K}_{\lambda}$ is a point where the $Re(\tilde{\gamma}^{(0)}_i)$'s  take less than $N$ distinct values, i.e. there are two different points $(\gamma^{(0)},\tilde{\gamma}^{(0)}_i)$ and $(\gamma^{(0)},\tilde{\gamma}^{(0)}_j)$ with the same first three coordinates. In other words, a crossing point occurs when $\Gamma$ meets $A$.\\
To formalize this, 
we parametrize $\Gamma$ as
\begin{equation}
\gamma:[0,2\pi]\longrightarrow \mathbb{D}
\end{equation}
with $\gamma(\theta_0)=\gamma^{(0)}$. We renumber the $i$'s and we reparametrize the $\tilde{\gamma}_i$'s in a neighbourhood of $\theta_0$ so that
$$Re\big(\tilde{\gamma}_1(\theta_0)\big)\geq Re\big(\tilde{\gamma}_2(\theta_0)\big)\geq...\geq Re\big(\tilde{\gamma}_k(\theta_0)\big)=Re\big(\tilde{\gamma}_{k+1}(\theta_0)\big)\geq...\geq Re\big(\tilde{\gamma}_N(\theta_0)\big)$$
This gives us the braid generator
\begin{equation}\label{sigma k}
\sigma_k^{\eta(\gamma_0)}
\end{equation}
with $\eta(\gamma_0)\in\{-1,+1\}$. \\
Note that if there are two different integers $i,j$, with $1\leq i,j\leq N-1$ such that
$$Re\big(\tilde{\gamma}_i(\theta_0)\big)=Re\big(\tilde{\gamma}_{i+1}(\theta_0)\big)\ \ \mbox{and}\ \ \ Re\big(\tilde{\gamma}_j(\theta_0)\big)=Re\big(\tilde{\gamma}_{j+1}(\theta_0)\big)$$
then $|i-j|\geq 2$, which implies that the corresponding $\sigma_i^{\pm}$ and $\sigma_j^{\pm}$ commute; thus it does not matter in which order we write them in the expression of $\beta$.\\
The sign $\eta(\gamma_0)$ of the crossing point in (\ref{sigma k}) is the sign of
\begin{equation}\label{signe d'un point de croisement}
[Im\big(\tilde{\gamma}_{k+1}(\theta_0)\big)-Im\big(\tilde{\gamma}_k(\theta_0)\big)][Re\big(\tilde{\gamma}'_k(\theta_0)\big)-Re\big(\tilde{\gamma}'_{k+1}(\theta_0)\big)]
\end{equation}
This sign is well-defined: the first factor in (\ref{signe d'un point de croisement}) is non zero, otherwise we would have a double point of $F_{\lambda,\mu}$ and we have assumed that none of the double points of $F_{\lambda,\mu}$ project to a point in $\Gamma$.\\
Let us see why the second factor of (\ref{signe d'un point de croisement}) is non-zero. The planes\\ $\pi_3\big(T_{(\gamma(\theta_0),\tilde{\gamma}_k(\theta_0))}F_{\lambda,\mu}(\mathbb{D})\big)$ and $\pi_3\big(T_{(\gamma(\theta_0),\tilde{\gamma}_{k+1}(\theta_0))}F_{\lambda,\mu}(\mathbb{D})\big)$ are transverse (see Lemma \ref{points fixes ds les 3 premieres coordonnes}) so they intersect in a line generated by a vector $X$ which projects to a vector tangent to $A$. The vector $\Big(\gamma'(\theta_0),Re \big(\tilde{\gamma}_k'(\theta_0)\big)\Big)$  - resp. $\Big(\gamma'(\theta_0),Re \big(\tilde{\gamma}_{k+1}'(\theta_0)\big)\Big)$ - completes $X$ in a basis of $\pi_3(T_{(\gamma(\theta_0),\gamma_k(\theta_0))})$ - resp. $\pi_3(T_{(\gamma(\theta_0),\gamma_{k+1}(\theta_0))})$. It follows that 
$$Re \big(\tilde{\gamma}_k'(\theta_0)\big)\neq Re \big(\tilde{\gamma}_{k+1}'(\theta_0)\big)$$
and the sign (\ref{signe d'un point de croisement}) is well-defined.
\\
\\
We now examine the three types
of crossing points.
\subsubsection{On the circle $C_\rho$}
We first investigate the crossing points of the braid
\begin{equation}\label{petites tresses}
w\mapsto (w^N,\lambda w)\ \ \ \ \mbox{(resp.}\ \ \ \ w\mapsto (w^N,\mu \bar{w}))
\end{equation}
Without loss of generality, we assume that $\lambda$ and $\mu$ are real so the crossing points of the braids are given by
\begin{equation}\label{egalite des cosinus}
\cos \frac{2\pi}{N}(\theta+k)=\cos\frac{2\pi}{N}(\theta+l)
\end{equation}
for $k,l\in\{1,...,N-1\}$ and $\theta\in[\zeta,1+\zeta]$, where $\zeta$ is a small positive number which we introduce to avoid crossing points at the endpoints of the interval. We get two values of $\theta$ for (\ref{egalite des cosinus}), namely
$$\theta_1=\frac{1}{2},\ \ \ \theta_2=1.$$
The integers $k,l$ appearing in (\ref{egalite des cosinus}) verify $k+l=N-1$ (resp. $k+l=N-2$) for $\theta_1$ (resp. $\theta_2$). The corresponding values for $\cos \frac{2\pi}{N}(\theta+k)$ are 
$$\mbox{the}\  \cos (2s+1)\frac{\pi}{N}\mbox{'s with }0\leq 2s\leq N-2$$
 $$\mbox{(resp. the }\cos (2s+2)\frac{\pi}{N}\mbox{'s with }0\leq 2s\leq N-3)$$
 Thus the $\cos \frac{2\pi}{N}(\theta+k)$'s go through the values of 
$\cos\frac{\pi}{N}m$ with $1\leq m\leq N-1$.  We conclude: the crossing points above $\theta_1$ (resp. $\theta_2$) correspond to the braid generators  $\sigma^{\pm 1}_{2k+1}$, $1\leq 2k+1\leq N-1$ (resp. $\sigma^{\pm 1}_{2k}$, $1\leq 2k\leq N-1$).\\
\\
It follows from  (\ref{signe d'un point de croisement}) that a crossing point $(\theta_1,\theta_2)$ of $w\mapsto (w^N,\lambda w)$ (resp. $w\mapsto (w^N,\mu \bar{w})$) is of the same sign as
$$(\sin\theta_1-\sin\theta_2)^2\ \ \ \ \mbox{(resp.}\ \ \ \ -(\sin\theta_1-\sin\theta_2)^2)$$
hence they are all positive (resp. all negative).\\
Unlike for the braids (\ref{petites tresses}) the crossing points of $\beta$ on $C_\rho$ will not all occur above the same two points of $C_\rho$; however, 
if $\rho$ is small enough and $\frac{\lambda}{\mu}$ is large enough or small enough, the pieces in $\beta$ corresponding to the crossing points of $C_\rho$ are given by the braids (\ref{petites tresses}): if  that is, the crossing points of $\hat{K}$ on $C_\rho$ translate into the two pieces of $\beta$ described in 1. and 2. of Th. \ref{desingularisation par bands}.
\subsubsection{On $\Gamma_i$}
We recall that $m_i$ is the double point of $F_{\lambda,\mu}$ which projects to the center of $\Gamma_i$. There exist $w_1,w_2\in\mathbb{D}$,
$w_1\neq w_2$ with
$$F_{\lambda,\mu}(w_1)=F_{\lambda,\mu}(w_2)=m_i.$$
We pick a neighbourhood $V_1$ of $w_1$ (resp. $V_2$ of $w_2$) in $\mathbb{D}$. We know that $\pi_3(F_{\lambda,\mu}(V_1))$ and  $\pi_3(F_{\lambda,\mu}(V_2))$
intersect transversally; the curve\\ $\pi_3(F_{\lambda,\mu}(V_1))\cap\pi_3(F_{\lambda,\mu}(V_2))$ projects to $A$ on $\Pi_2$. We also know that  $\Gamma_i$ meets $A$ exactly at two points $P_i,Q_i$ (cf. \S \ref{les cercles Gamma}): the preimages of $P_i$ and $Q_i$ on $\pi_3(F_{\lambda,\mu}(V_1))\cap\pi_3(F_{\lambda,\mu}(V_2))$ give us two braid generators $\sigma_k^{\pm}$ (with the same $k$).\\
We know from Lemma \ref{pas d'intersection des ensembles finis} that these are the only braid generators corresponding to the crossing points $P_i$ and $Q_i$; to get a complete picture of the braid above $\Gamma_i$, we just need to figure out the sign of each of the two $\sigma_k^{\pm}$'s: 
\begin{lem} Let $Q$ be one of the crossing points of
$\beta$ on the circle $\Gamma_i$. Let $m_i\in\mathbb{R}^4$ be the double point of $F_{\lambda,\mu}$ which projects to $p_i$.  
The sign of the crossing point $Q$ is equal to the sign of the double point $m_i$ as a double point of $F_{\lambda,\mu}$.
\end{lem}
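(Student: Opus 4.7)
My plan is to compute both signs explicitly in complex coordinates near $m_i$ and verify that they reduce to the same algebraic quantity. Let $w_1,w_2\in\mathbb{D}$ denote the two preimages of $m_i$, with $w_2=\nu w_1$, $\nu^N=1$, $\nu\neq 1$. Writing $F_{\lambda,\mu}(w)=(w^N,g(w))$ with $g(w)=h(w)+\lambda w+\mu\bar w$, I introduce the shorthand $p_j:=Nw_j^{N-1}$ (so that $p_2=\bar\nu p_1$), $\alpha_j:=g_z(w_j)$, and $\beta_j:=g_{\bar z}(w_j)$.

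For $\epsilon(m_i)$, I would describe the tangent plane at $m_i$ coming from the $j$-th sheet through two real-linear maps: $z\mapsto p_j z$ onto $\Pi_2$ and $B_j:z\mapsto\alpha_j z+\beta_j\bar z$ onto $\Pi_{34}$. Assembling the four tangent vectors into a $4\times 4$ matrix $M$ viewed as a $2\times 2$ block matrix with top blocks $A_j$ (multiplication by $p_j$) and bottom blocks $B_j$, a Schur-complement reduction --- using that $A_1^{-1}A_2$ is multiplication by $\bar\nu$, together with the elementary identity $\det_{\mathbb{R}}(z\mapsto Az+B\bar z)=|A|^2-|B|^2$ --- yields
$$\det M=|p_1|^2\bigl(|\alpha_2-\bar\nu\alpha_1|^2-|\beta_2-\nu\beta_1|^2\bigr),$$
so $\epsilon(m_i)$ is the sign of the bracketed expression.

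For the crossing sign at $Q\in\{P_i,Q_i\}$, I would parametrize $\Gamma_i$ as $\gamma(\theta)=p_i+re^{i\theta}$, lift to two functions $w_j(\theta)$ with $w_j(\theta)^N=\gamma(\theta)$, and Taylor expand to first order in $r$: $w_j(\theta)-w_j=re^{i\theta}/p_j+O(r^2)$, which gives
$$\tilde\gamma_j(\theta)=g(w_j)+a_jre^{i\theta}+b_jre^{-i\theta}+O(r^2),\qquad a_j:=\alpha_j/p_j,\ b_j:=\beta_j/\bar p_j.$$
Since $g(w_1)=g(w_2)$, the crossing equation $Re(\tilde\gamma_1-\tilde\gamma_2)(\theta_0)=0$ fixes $\theta_0$, and substituting into (\ref{signe d'un point de croisement}) together with the identity $Re[(c-d)\overline{(c+d)}]=|c|^2-|d|^2$ (with $c:=a_1-a_2$, $d:=b_1-b_2$) collapses the crossing sign to $\mathrm{sign}(|c|^2-|d|^2)$. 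Rewriting via $p_2=\bar\nu p_1$ gives $|c|^2-|d|^2=(|\alpha_1-\nu\alpha_2|^2-|\beta_1-\bar\nu\beta_2|^2)/|p_1|^2$, and since $|\alpha_1-\nu\alpha_2|=|\alpha_2-\bar\nu\alpha_1|$ and analogously for $\beta$, this quantity has the same sign as the bracketed expression in $\det M$, so the crossing sign equals $\epsilon(m_i)$.

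The main obstacle I anticipate is bookkeeping the orientation conventions in (\ref{signe d'un point de croisement}): the formula appears to depend on which strand is labeled $k$ versus $k+1$, so I would verify that the product of the two factors is invariant under the swap (both factors change sign, so the product is unchanged), making the crossing sign well defined. As a consistency check, the two crossings $P_i$ and $Q_i$ on $\Gamma_i$ correspond to $\theta_0$ and $\theta_0+\pi$, where each factor in (\ref{signe d'un point de croisement}) flips sign; hence both crossings share the same sign, matching the statement of the lemma.
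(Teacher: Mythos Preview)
Your plan is correct and carries through; it is, however, a genuinely different argument from the paper's. The paper works geometrically: it chooses a basis $(u,v,e_3,e_4)$ of $\mathbb{R}^4$ adapted to the situation, where $u$ is tangent to $A$ (the projection of the intersection line $\pi_3(T_0)\cap\pi_3(T_1)$) and $v$ is tangent to $\Gamma$ at $Q$; it then lifts $u,v$ to each tangent plane $T_0,T_1$, computes $\det(u_0,v_0,u_1,v_1)=-(\beta-\beta')(\gamma-\gamma')$ directly, and identifies the two factors with the two factors in the crossing--sign formula~(\ref{signe d'un point de croisement}). Your route is purely complex--analytic: you reduce the $4\times4$ intersection determinant by a Schur complement to the closed expression $|p_1|^2\bigl(|\alpha_2-\bar\nu\alpha_1|^2-|\beta_2-\nu\beta_1|^2\bigr)$, and independently expand the lifted strands over $\Gamma_i$ to first order in $r$ to collapse the crossing sign to $\mathrm{sign}(|c|^2-|d|^2)$, which equals the same quantity. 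The paper's argument is basis--dependent but makes the geometric meaning of each factor transparent; your argument gives an explicit invariant formula for both signs and makes the equality algebraically inevitable, at the cost of the first--order (small~$r$) approximation, which the paper also tacitly needs. Your remark that swapping the labels $k\leftrightarrow k+1$ flips both factors of~(\ref{signe d'un point de croisement}) is the right way to dispose of the labeling ambiguity, and your consistency check that $\theta_0$ and $\theta_0+\pi$ give the same sign matches the paper's conclusion that $\Gamma_i$ contributes $\sigma_k^{2\epsilon(m_i)}$.
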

\begin{proof}
We let $T_0$ and $T_1$ be the two tangent planes to $F_{\lambda,\mu}(\mathbb{D})$ at $m_i$ and we construct positive bases of $\mathbb{R}^4$, $T_0$ and $T_1$.\\
Since the planes $T_0$ and $T_1$ intersect transversally, the planes $\pi_3(T_0)$ and $\pi_3(T_1)$ also intersect transversally. We let $U$ be a vector in $\Pi_3$ generating $\pi_3(T_0)\cap\pi_3(T_1)$; since $\pi_2\circ F_{\lambda,\mu}(w)=w^N$, $\pi_2\circ F_{\lambda,\mu}$ is a local immersion outside of $0$ and $U$ projects to a non-zero vector $u$ in $\Pi_2$ which is tangent to $A$.\\ We let $v$ be a vector tangent to $\Gamma$ at 
$Q$ oriented in the direction of $\Gamma$; possibly after changing $u$ in $-u$,  $(u,v)$ is a positive basis of $\Pi_2$ (we remind the reader that we have assumed that $A$ and $\Gamma$ meet transversally). 
\\Because $\pi_2\circ F_{\lambda,\mu}$ is a local immersion outside of $0$, there exists a unique $u_i\in P_i$ and a unique $v_i\in P_i$ with 
$$\pi_2(u_i)=u\ \ \ \ \pi_2(v_i)=v$$
Moreover $\pi_2\circ F_\lambda$ preserves the orientation, hence the basis $(u_0,v_0)$ (resp. $(u_1,v_1)$) is a positive basis of $T_0$ (resp. $T_1$). \\
We let $(e_1,e_2,e_3,e_4)$ be an orthonormal positive basis of $\mathbb{R}^4$ with $e_1,e_2$ in $\Pi_2$ and we define
another positive basis of $\mathbb{R}^4$ by
\begin{equation}\label{base de R4}
{\mathcal B}=(u,v,e_3,e_4).
\end{equation}
We write the coordinates in ${\mathcal B}$ of the vectors in the bases base vectors of $T_0$ and $T_1$, namely
$$u_0=(1,0,\alpha,\gamma)\ \ \ \  v_0=(0,1,\beta,\delta)$$
$$u_1=(1,0,\alpha,\gamma')\ \ \ \ v_1=(0,1,\beta',\delta')$$
and we compute the
determinant
\begin{equation}\label{determinant quatre quatre}
det(u_0,v_0,u_1,v_1)=
\left |
\begin{array}{cccc}
1&0&1&0\\
0&1&0&1\\
\alpha&\beta&\alpha&\beta'\\
\gamma&\delta&\gamma'&\delta' \end{array} \right|= -
(\beta-\beta')(\gamma-\gamma')
\end{equation}
\\
We now recover from (\ref{determinant quatre quatre}) the sign of the crossing points of the braid given by   (\ref{signe d'un point de croisement}). \\
For $i=0,1$, we let $V_i$ be a small disk in $F_{\lambda,\mu}(\mathbb{D})$ tangent to $T_i$. We  denote again the two strands which meet at the crossing point by the coordinates in $\mathbb{C}\oplus\mathbb{C}$: $(\gamma(\theta),\tilde{\gamma}_{k}(\theta))$ and $(\gamma(\theta),\tilde{\gamma}_{k+1}(\theta))$. Locally, one is the lift of $\Gamma$ to $V_0$ and the other one is the lift of $\Gamma$ to $V_1$.\\ 
Since the $v_i$'s both project to $v$, we derive that $v_0$ (resp. $v_1$) is the vector tangent to $F_{\lambda,\mu}\cap\pi_2^{-1}(\Gamma)$ above 
$Q$ on $V_0$ (resp. $V_1$). Hence
$$Re\big(\tilde{\gamma}_{k}'(\theta_0)\big)-Re\big(\tilde{\gamma}_{k+1}'(\theta_0)\big)$$
 has the same sign as $\beta-\beta'$.\\
\\
We now use the fact that $Q$ belongs to $A$. Since $p_i$ is a regular point of $A$, $A$ is parametrized near $q_i$ by
\begin{equation}\label{parametrisation de A}
a:t\mapsto q_i+tu+o(t^2)
\end{equation}
Since $(u,v)$ is a positive basis of $\Pi_2$ and $v$ is tangent to $\Gamma$ at $Q$, the point $Q$ is on the side of the positive $t$'s in (\ref{parametrisation de A}). The lift of $A$ to $V_0$ (resp. $V_1$) is parametrized by
\begin{equation}\label{parametrisation des eta}
\tilde{a}_0(t)=m_i+tu_0+o(t^2)\ \ \ \ \ \tilde{a}_1(t)=m_i+tu_1+o(t^2)
\end{equation}
Thus, if we have taken
$\Gamma_i$ small enough, $Im(\tilde{\gamma}_k(\theta_0))-Im(\tilde{\gamma}_{k+1}(\theta_0))$ is of the same sign as $(u_0)_4-(u_1)_4=\gamma-\gamma'$.
\end{proof}
Thus the circle $\Gamma_i$ contributes $\sigma_k^{2\epsilon(q_i)}$ to the braid. 
\subsubsection{On $\partial\mathcal T_i$}
We proceed as in [Ru 1]. \\
If ${\mathcal T}_i$ is a small enough neighbourhood, the map $F_{\lambda,\mu}:F_{\lambda,\mu}^{-1}({\mathcal T}_i)\longrightarrow {\mathcal T}_i$ is a covering, hence $F_{\lambda,\mu}^{-1}({\mathcal T}_i)$ is a disjoint union of $N$ copies of $L_i\times[-\eta,+\eta]$ for a small $\eta>0$.\\
If $q_0$ is a point in $L_i\cap A$, there are two points $q_1$ and $q_2$ close to $q_0$ in $\mathcal T_i\cap A$, one in each component of $\mathcal T_i$. If the $k$-th and $(k+1)$-th leaf of $\pi_3\circ F_{\lambda,\mu}(\mathbb{D})$ coincide above $q_0$, the same is true for $q_1$ and $q_2$. Hence $q_1$ and $q_2$ each give us a braid generator $\sigma_k^{\pm}$ for $\beta$.\\
These two $\sigma_k^{\pm}$'s have opposite signs. Indeed, if we look at formula (\ref{signe d'un point de croisement}), the factors $Im\big(\tilde{\gamma}_{k+1}(\theta_0)\big)-Im\big(\tilde{\gamma}_k(\theta_0)\big)$ take the same sign for both $q_1$ and $q_2$, whereas the factors
$Re\big(\tilde{\gamma}'_k(\theta_0)\big)-Re\big(\tilde{\gamma}'_{k+1}(\theta_0)\big)$ take opposite signs.\\
\\
Putting all the $\sigma_k^{\pm 1}$'s together, we get an element $b_i\in B_N$ such that the piece of the braid which consists in going along ${\mathcal T}_i$, around $\Gamma_i$ and back along ${\mathcal T}_i$ can be written as
\begin{equation}
b_i \sigma_{k(i)}^{2 \epsilon(Q)} b_i^{-1}
\end{equation}
where $k(i)$ is an integer in $\{1,...,N-1\}$ and $\epsilon(Q)$ is the sign of the crossing point. \\
We get the terms in the braid of Th. \ref{desingularisation par bands} 3 and the proof of Th. \ref{desingularisation par bands} is completed.

\footnotesize{Marina.Ville@lmpt.univ-tours.fr\\LMPT,
 Universit\'e de Tours
 UFR Sciences et Techniques
 Parc de Grandmont
 37200 Tours, FRANCE}


\begin{thebibliography}{10}
\bibitem[G-O-R]{gor} R. D. Gulliver II, R. Osserman, H. L. Royden {\it A Theory of Branched Immersions of Surfaces},
Amer. Jour. of Maths,
Vol. 95, No. 4 (1973), pp. 750-812
\bibitem[\L o]{lo} S. \L ojasiewicz, {\it Sur la g\'eom\'etrie semi- et sous-analytique}, Ann. de l'Institut Fourier, 43(5) 1993, 1575-1595.
\bibitem[Mi]{mi} J. Milnor, {\it Singular points of complex hypersurfaces}, Ann. of mathematics studies, PUP (1969).
\bibitem[Ru 1]{ru1}    L. Rudolph, {\it Algebraic functions and closed braids}, Topology 22(2) (1983) 191-202 and arxiv.org/pdf/math/0411316
\bibitem[Ru 2]{ru2}  L. Rudolph, {\it Braided surfaces and Seifert ribbons for closed braids}, Comment. Math. Helvetici 58 (1983) 001-037
\bibitem[S-V]{sv1} M. Soret, M.Ville, {\it Singularity Knots of Minimal Surfaces in $R^4$}, Jour. of Knot theory and its ramifications, 20 (4), (2011), 513-546. 
\bibitem[Vi 1]{vi1} M. Ville, {\it Branched immersions and braids}, Geom. Dedicata, 140(1), 2009, 145-162.
\bibitem[Vi 2]{vi2} M. Ville, {\it Desingularization of branch points of minimal disks in $\mathbb{R}^4$}, http://arxiv.org/pdf/1412.0589

\end{thebibliography}
\end{document}